\documentclass[11pt]{amsart}

\usepackage{amsmath,amsthm,amssymb,amscd,amsxtra,upref}
\usepackage{latexsym}
\usepackage[dvips]{graphics,epsfig}
\usepackage[colorlinks=true, pdfstartview=FitV, linkcolor=blue, citecolor=blue, urlcolor=blue]{hyperref}

\oddsidemargin0cm \evensidemargin0cm 
\setlength{\textwidth}{16cm}

\newtheorem{theorem}{Theorem}[section]
\newtheorem{lemma}[theorem]{Lemma}
\newtheorem{proposition}[theorem]{Proposition}
\newtheorem{corollary}[theorem]{Corollary}

\theoremstyle{remark}
\newtheorem{dfn}{Definition}[section]

\newtheorem{remark}{Remark}[section]


\newcommand{\mb}{\medskip\noindent}
\newcommand{\gb}{\bigskip\noindent}

\newcommand{\re}{\mathbb{R}}
\newcommand{\rn}{\mathbb{R}^n}

\newcommand{\M}{{\mathcal M}}
\newcommand{\R}{{\mathbb R}}
\newcommand{\pr}{{\mathbb P}}

\def\Xint#1{\mathchoice
   {\XXint\displaystyle\textstyle{#1}}%
   {\XXint\textstyle\scriptstyle{#1}}%
   {\XXint\scriptstyle\scriptscriptstyle{#1}}%
   {\XXint\scriptscriptstyle\scriptscriptstyle{#1}}%
   \!\int}
\def\XXint#1#2#3{{\setbox0=\hbox{$#1{#2#3}{\int}$}
     \vcenter{\hbox{$#2#3$}}\kern-.5\wd0}}

\def\aver#1{\Xint-_{#1}}

\hyphenation{pseudo-differential}

\begin{document}

\subjclass[2010]{Primary 42B20, 42B15}

\keywords{Calder\'on-Zygmund operators, Bochner-Riesz multipliers}

\thanks{The author is supported by the ANR under the project AFoMEN no. 2011-JS01-001-01.}

\author{Fr\'ed\'eric Bernicot}
\address{Fr\'ed\'eric Bernicot, Laboratoire de Math\'ematiques Jean Leray \\ 2, Rue de la Houssini\`ere F-44322 Nantes Cedex 03, France.} \email{frederic.bernicot@univ-nantes.fr}

\title[Multi-frequency analysis]{Multi-frequency Calder\'on-Zygmund analysis and connexion to Bochner-Riesz multipliers}

\day=01 \month=04 \year=2013
\date{\today}

\begin{abstract} In this work, we describe several results exhibited during a talk at the {\it El Escorial 2012} conference.
We aim to pursue the development of a multi-frequency Calder\'on-Zygmund analysis introduced in \cite{NOT}. We set a definition of general multi-frequency Calder\'on-Zygmund operators. Unweighted estimates are obtained using the corresponding multi-frequency decomposition of \cite{NOT}. Involving a new kind of maximal sharp function, weighted estimates are obtained. 
\end{abstract}

\maketitle

The so-called Calder\'on-Zygmund theory and its ramifications have proved to be a powerful tool in many aspects of harmonic analysis and partial differential equations.
The main thrust of the theory is provided by 
\begin{itemize} 
 \item the Calder\'on-Zygmund decomposition, whose impact is deep and far-reaching. This decomposition is a crucial tool
to obtain weak type $(1,1)$ estimates and consequently $L^p$ bounds for a variety of operators;
 \item the use of the ``local'' oscillation $f - \left( \aver{Q}f \right)$ (for $Q$ a ball). These oscillations appear in the elementary functions of the ``bad part'' coming from the Calder\'on-Zygmund decomposition and in the definition of the maximal sharp function, which allows to get weighted estimates.
\end{itemize}

The oscillation $f - \left(\aver{Q} f\right)$ can be seen as the distance between the function $f$ and the set of constant functions on the ball $Q$, indeed the average is the best way to locally approximate the function by a constant. By this way, the constant function being associated to the frequency $0$, we understand how the classical Calder\'on-Zygmund theory is related to the frequency $0$.

As for example, well-known Calder\'on-Zygmund operators are the Fourier multipliers associated to a symbol $m$ satisfying H\"ormander's condition
$$ | \partial^\alpha m(\xi)| \lesssim |\xi|^{-|\alpha|}=d(\xi,0)^{-|\alpha|},$$
which encodes regularity assumption of the symbol relatively to the frequency $0$.

\mb
In this work, we are interested in the extension of this theory with respect to a collection of frequencies and we focus on sharp constants relatively to the number of the considered frequencies.

\gb Such questions naturally arise as soon as we work on a multi-frequency problem:
\begin{itemize}
 \item Uniform bounds for a Walsh model of the bilinear Hilbert transform (see \cite{OT} by Oberlin and Thiele);
 \item A variation norm variant of Carleson's theorem (see \cite{OSTTW} by Oberlin, Seeger, Tao, Thiele and Wright);
 \item Such a multi-frequency Calder\'on-Zygmund was introduced by Nazarov, Oberlin and Thiele in \cite{NOT} for proving a variation norm variant of a Bourgain's maximal inequality.
\end{itemize}

Similarly to the fact that a Fourier multiplier with a symbol satisfying H\"ormander's condition is a classical Calder\'on-Zygmund, we may extend this property to a collection of frequencies. More precisely, let $\Theta:=(\xi_1,...,\xi_N)$ be a collection of frequencies and consider a symbol $m$ verifying for all multi-indices $\alpha$
$$ | \partial^\alpha m(\xi)| \lesssim d(\xi,\Theta)^{-|\alpha|},$$
with $d(\xi,\Theta):= \min_{1\leq i \leq N} |\xi-\xi_i|$.
Such symbols give rise to Fourier multipliers, which should be the prototype of what we want to call {\it multi-frequency Calder\'on-Zygmund operators}.

\mb

In the $1$-dimensional setting with a collection of frequencies $\Theta:=(\xi_1,...,\xi_N)$ (assumed to be indexed by the increasing order $\xi_1< \xi_2< \cdots < \xi_N$), an example is given by the multi-frequency Hilbert transform which corresponds to the symbol
$$ m(\xi) = \left\{ \begin{array}{ll}
                     -1, & \xi<\xi_1 \\
                      (-1)^{j+1}, & \xi_j<\xi<\xi_{j+1} \\
                      (-1)^{N+1}, & \xi>\xi_N.
                    \end{array} \right. $$

Let us now detail a definition of ``multi-frequency Calder\'on-Zygmund'' operator:

\begin{dfn} Let $\Theta:=(\xi_1,...,\xi_N)$ be a collection of $N$ frequencies of $\R^n$. An $L^2$-bounded linear operator $T$ is said to be a Calder\'on-Zygmund operator relatively to $\Theta$ if there exist operators $(T_j)_{j=1,...,N}$ and kernels $(K_j)_{j=1,...,N}$ verifying 
 \begin{itemize}
  \item Decomposition: $T=\sum_{j=1}^N T_j$;
  \item Integral representation of $T_j$: for every function $f\in L^2$ compactly supported and $x\in \textrm{supp}(f)^{c}$, 
  $$ T_j(f)(x) = \int K_j(x,y) f(y);$$
  \item Regularity of the modulated kernels: for every $x\neq y$
$$ \sum_{j=1}^N \left|\nabla_{(x,y)}\ e^{i\xi_j\cdot(x-y)} K_j(x,y)\right| \lesssim |x-y|^{-n-1}.$$
 \end{itemize}
\end{dfn}

\begin{remark}
As usual, we can weaken the regularity assumption and just require an $\epsilon$-H\"older regularity on the modulated kernels.
\end{remark}

\begin{remark}
If the decomposition is assumed to be orthogonal (which means that for $i\neq j$, $T_iT_j^*=0$) then it follows that each operator $T_j$ is a modulated Calder\'on-Zygmund operator. Such a multi-frequency Calder\'on-Zygmund operator can also be pointwisely bounded by a sum of $N$ modulated (classical) Calder\'on-Zygmund operators and have the same boundedness properties with an implicit constant of order $N$. The aim is to study how this order can be improved using sharp estimates.
\end{remark}

We first obtain unweighted estimates for such operators:
\begin{theorem} \label{thm:boundedness} Let $\Theta$ be a collection of $N$ frequencies and $T$ an associated multi-frequency Calder\'on-Zygmund operator. Then 
\begin{itemize}
 \item for $p\in(1,\infty)$, $T$ is bounded on $L^p$ with
$$ \|T\|_{L^p \to L^p} \lesssim N^{\left|\frac{1}{p}-\frac{1}{2}\right|}.$$
 \item for $p=1$, $T$ is of weak-type $(1,1)$ with
$$ \|T\|_{L^1 \to L^{1,\infty}} \lesssim N^{\frac{1}{2}}.$$
\end{itemize}
\end{theorem}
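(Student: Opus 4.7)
The plan is to prove the weak-type $(1,1)$ estimate $\|T\|_{L^1\to L^{1,\infty}}\lesssim N^{1/2}$; the full $L^p$ range will then follow by Marcinkiewicz interpolation against the assumed $L^2\to L^2$ bound of constant $\lesssim 1$, giving $N^{1/p-1/2}$ for $p\in(1,2]$, and by duality for $p\in[2,\infty)$. The dualization is legitimate because the adjoint $T^*=\sum_j T_j^*$ carries the same multi-frequency Calder\'on-Zygmund structure: the kernel of $T_j^*$ is $\overline{K_j(y,x)}$, and $e^{i\xi_j(x-y)}\overline{K_j(y,x)}$ is the complex conjugate of the smooth modulated kernel $e^{i\xi_j(y-x)}K_j(y,x)$ at swapped variables, hence enjoys the same gradient bound. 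Therefore $\|T\|_{L^p\to L^p}=\|T^*\|_{L^{p'}\to L^{p'}}\lesssim N^{|1/p'-1/2|}=N^{|1/2-1/p|}$ for $p\in[2,\infty)$.

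For the weak-$(1,1)$ bound, fix $f\in L^1$ and $\alpha>0$ and apply the multi-frequency Calder\'on-Zygmund decomposition of \cite{NOT}: this produces $f=g+\sum_{Q\in\mathcal{Q}}b_Q$ where $\mathcal{Q}$ is a pairwise disjoint family of dyadic cubes obeying $\sum_Q|Q|\lesssim\alpha^{-1}\|f\|_{L^1}$; each $b_Q$ is supported in $Q$ and satisfies the $N$ modulated cancellation conditions
\begin{equation*}
\int_Q e^{-i\xi_j y}\,b_Q(y)\,dy\;=\;0,\qquad j=1,\ldots,N;
\end{equation*}
and the good part obeys the enhanced $L^2$ estimate $\|g\|_{L^2}^2\lesssim N^{1/2}\,\alpha\,\|f\|_{L^1}$. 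The improved constant $N^{1/2}$ (instead of the naive $N$ one would obtain by projecting onto each frequency independently) comes from a Bessel-type estimate on the $N$ near-orthonormal modulated characters $\{|Q|^{-1/2}e^{i\xi_j\cdot}\chi_Q\}_{j=1}^N$ in $L^2(Q)$.

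For the good part, $L^2$-boundedness of $T$ combined with Chebyshev gives
\begin{equation*}
|\{|Tg|>\alpha/2\}|\;\lesssim\;\alpha^{-2}\|Tg\|_{L^2}^2\;\lesssim\;\alpha^{-2}\|g\|_{L^2}^2\;\lesssim\;N^{1/2}\alpha^{-1}\|f\|_{L^1}.
\end{equation*}
For the bad part, set $\Omega=\bigcup_Q 2Q$, so $|\Omega|\lesssim\alpha^{-1}\|f\|_{L^1}$. Outside $\Omega$, the $N$ cancellations combined with the modulated smoothness of the kernels give, with $\tilde K_j(x,y)=e^{i\xi_j(x-y)}K_j(x,y)$, the representation
\begin{equation*}
T_j b_Q(x)\;=\;e^{-i\xi_j x}\int_Q\bigl[\tilde K_j(x,y)-\tilde K_j(x,y_Q)\bigr]\,e^{i\xi_j y}\,b_Q(y)\,dy,
\end{equation*}
from which the gradient bound yields $|T_j b_Q(x)|\lesssim\ell(Q)|x-y_Q|^{-n-1}\|b_Q\|_{L^1}$.

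Here lies the principal obstacle: a naive sum of $|T_j b_Q|$ over $j$ costs a factor of $N$, not $N^{1/2}$. The remedy is to estimate $Tb=\sum_j T_j b$ as a whole in $L^2(\Omega^c)$, exploiting the Hilbert-space orthogonality of the $N$ modulations $e^{-i\xi_j x}$ to recover the missing gain of $N^{1/2}$, and then apply Chebyshev at the $L^2$ level to obtain $|\{|Tb|>\alpha/2\}\setminus\Omega|\lesssim N^{1/2}\alpha^{-1}\|f\|_{L^1}$. Adding the three contributions $|\{|Tg|>\alpha/2\}|$, $|\Omega|$, and $|\{|Tb|>\alpha/2\}\setminus\Omega|$ yields the weak-$(1,1)$ bound with constant $N^{1/2}$, and the remaining $L^p$ bounds follow as explained in the first paragraph.
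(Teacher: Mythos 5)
Your overall skeleton --- the Nazarov--Oberlin--Thiele decomposition, Chebyshev at $L^2$ for the good part, kernel cancellation for the bad part off the expanded cubes, then Marcinkiewicz interpolation and duality --- is the same as the paper's, and the interpolation/duality reduction is fine. But the treatment of the bad part contains a genuine gap, and it stems from a misreading of the definition of a multi-frequency Calder\'on--Zygmund operator. The regularity hypothesis is $\sum_{j=1}^N \bigl|\nabla_{(x,y)}\, e^{i\xi_j\cdot(x-y)} K_j(x,y)\bigr| \lesssim |x-y|^{-n-1}$, with the sum over $j$ already on the left-hand side. Consequently, after subtracting the modulated values $e^{i\xi_j\cdot c(Q)}K_j(x,c(Q))e^{-i\xi_j\cdot y}$ (legitimate by the $N$ cancellations of $b_Q$), one gets $\sum_j \bigl|\widetilde K_j(x,y)-\widetilde K_j(x,c(Q))\bigr|\lesssim \ell(Q)\,|x-c(Q)|^{-n-1}$ with \emph{no} factor of $N$; the "principal obstacle" you describe does not exist, and the bad part is closed in $L^1(\Omega^c)$ exactly as in the classical theory: $\|Tb_Q\|_{L^1((2Q)^c)}\lesssim \|b_Q\|_{L^1}\lesssim |Q|\alpha$, summed over $Q$. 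The $N^{1/2}$ in the final bound comes solely from the decomposition itself, namely $\sum_Q|Q|\lesssim N^{1/2}\alpha^{-1}\|f\|_{L^1}$ (which you also misquote: the NOT selection runs at threshold $\alpha N^{-1/2}$, so the total measure of the selected cubes carries the factor $N^{1/2}$, not the $N$-free bound you state) together with $\|g\|_{L^2}^2\lesssim N^{1/2}\alpha\|f\|_{L^1}$.

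The remedy you propose for your (non-existent) obstacle would moreover not work. Estimating $Tb$ in $L^2(\Omega^c)$ is not available: $b$ is built from an $L^1$ function and carries only $L^1$ control ($\|b_Q\|_{L^1}\lesssim|Q|\alpha$), so there is no a priori $L^2$ bound on $Tb$ off the cubes to feed into Chebyshev. And the claimed "Hilbert-space orthogonality of the $N$ modulations $e^{-i\xi_j x}$" is not quantitative for an arbitrary collection $\Theta$: the $\xi_j$ may be arbitrarily close together, so the modulated pieces $e^{-i\xi_j x}\int[\widetilde K_j(x,y)-\widetilde K_j(x,c(Q))]e^{i\xi_j y}b_Q(y)\,dy$ enjoy no almost-orthogonality on $\Omega^c$ that would recover a factor $N^{-1/2}$. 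Replace that step by the $L^1((2Q)^c)$ estimate using the summed kernel regularity, and correct the statement of the decomposition, and the argument matches the paper's proof of Proposition 2.1.
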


This theorem relies on an adapted Calder\'on-Zygmund decomposition introduced in \cite{NOT} by Nazarov, Oberlin and Thiele. We point out that there the constant $N^{\frac{1}{2}}$ is shown to be optimal and this is the same for the previous weak-type estimate.

\bigskip
Concerning weighted estimates, it is well-known that linear Calder\'on-Zygmund operators are bounded on $L^p(\omega)$ for $p\in(1,\infty)$ and every weight $\omega$ belonging to the Muckenhoupt's class ${\mathbb A}_p$ (see Definitions \ref{def:1} and \ref{def:2} for more details about Muckenhoupt's class ${\mathbb A}_p$ and Reverse H\"older class $RH_s$). Similar properties are satisfied by the Hardy-Littlewood maximal operator and some other linear operators as Bochner-Riesz multipliers \cite{Vargas,CDL} or non-integral operators (like Riesz transforms) \cite{AM}. All these boundedness, obtained by using suitable Fefferman-Stein inequalities related to maximal sharp functions, involve weights belonging to the class ${\mathcal W}^p (p_0 , q_0 ) := {\mathbb A}_{\frac{p}{p_0}} \cap RH_{(\frac{q_0}{p})'}$ for some exponents $p_0<q_0$. 
\footnote{From \cite{JN}, we know that for $r,s>1$, 
$$ {\mathbb A}_{r} \cap RH_s = \left\{\omega, \omega^{s} \in{\mathbb A}_{1+s(r-1)}\right\},$$ so these classes of weights are equivalent to a class of powers of Muckenhoupt's weights.}

As a consequence, it seems that these classes of weights are well-adapted for proving boundedness of linear operators. Following this observation, we will consider a multi-frequency maximal sharp function, in order to prove weighted estimates for our multi-frequency operators:

\begin{theorem} \label{thm:weight} Let $\Theta$ be a collection of $N$ frequencies. For $p\in (1,\infty)$, $s\in(1,p)$ and $t\in(1,\infty)$, then every multi-frequency Calder\'on-Zygmund operator $T$ is bounded on $L^p(\omega)$ for every weight $\omega \in RH_{t'} \cap {\mathbb A}_{\frac{p}{s}}$ with
$$ \|T\|_{L^p(\omega) \to L^p(\omega)} \lesssim N^{\gamma}$$
and
$$ \gamma:=\frac{tp}{s\min\{2,s\}}+\left|\frac{1}{2}-\frac{1}{s}\right|.$$
We emphasize that this result is only interesting when $\gamma<1$.
\end{theorem}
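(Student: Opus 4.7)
The scheme follows the strategy used in \cite{Vargas,CDL,AM} for Bochner-Riesz and non-integral operators: introduce a maximal sharp function adapted to $T$, prove an associated Fefferman-Stein inequality, and dominate the sharp function of $Tf$ pointwise by an $L^s$ maximal function. Concretely, I would define
$$M^{\sharp}_{\Theta,r} g(x) := \sup_{Q\ni x}\inf_{P\in\mathcal{E}_\Theta(Q)} \left(\aver{Q}|g-P|^{r}\right)^{1/r}$$
for a suitable $r$ tied to the pair $(s,t)$ through the Fefferman-Stein theory of the class $RH_{t'}\cap\mathbb{A}_{p/s}$, where $\mathcal{E}_\Theta(Q)$ is the $N$-dimensional space of ``trigonometric polynomials'' spanned by $\{e^{i\xi_j\cdot y}\}_{j=1}^N$ restricted to $Q$. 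Replacing the constant approximation of classical theory by an element of $\mathcal{E}_\Theta(Q)$ is what allows one to exploit the modulated-kernel regularity hypothesis.

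Once this sharp function is fixed, the general Fefferman-Stein machinery (adapted in the usual way from constants to the subspace $\mathcal{E}_\Theta(Q)$, and using $\omega\in RH_{t'}\cap\mathbb{A}_{p/s}$ both to upgrade averages and to obtain $M_s$-boundedness) yields the chain
$$\|Tf\|_{L^p(\omega)}\lesssim \|M^{\sharp}_{\Theta,r}(Tf)\|_{L^p(\omega)}\lesssim \|M_s f\|_{L^p(\omega)}\lesssim \|f\|_{L^p(\omega)},$$
with a constant independent of $N$ at every step \emph{except} the middle one. The task therefore reduces to the pointwise estimate
$$M^{\sharp}_{\Theta,r}(Tf)(x)\lesssim N^{\gamma}\,M_s f(x).$$

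For this, fix $x\in Q$ and split $f=f_1+f_2$ with $f_1:=f\chi_{2Q}$. For the local part, choose the trivial approximation $P=0$ and apply Theorem \ref{thm:boundedness} at exponent $s$; this contributes the factor $N^{|1/s-1/2|}$, while the interpolation needed to pass from the $L^s$ bound to the inner $L^r$ average of the sharp function forces the multiplicative loss $N^{tp/(s\min\{2,s\})}$. For the far part, the natural candidate
$$P_Q(x):=\sum_{j=1}^N e^{-i\xi_j\cdot x}\int_{(2Q)^c}e^{i\xi_j\cdot y}\bigl(e^{i\xi_j\cdot(x_Q-y)}K_j(x_Q,y)\bigr)f(y)\,dy\;\in\;\mathcal{E}_\Theta(Q)$$
cancels the leading modulated behavior of each $T_jf_2$ on $Q$: telescoping to the center $x_Q$ using $|\nabla_{(x,y)}(e^{i\xi_j\cdot(x-y)}K_j(x,y))|\lesssim|x-y|^{-n-1}$ gives, after summation over $j$, a pointwise remainder bounded by $N\,Mf(x)$.

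The main obstacle is precisely this naive $N$ on the far part: a crude sum over the $N$ frequencies loses a full power, whereas Theorem \ref{thm:boundedness} says that the joint operator norm is only $N^{|1/s-1/2|}$ on $L^s$. To bring the tail estimate down to the target exponent $\gamma = \frac{tp}{s\min\{2,s\}}+|\frac{1}{s}-\frac{1}{2}|$, one must interpolate the pointwise kernel remainder against the unweighted bound of Theorem \ref{thm:boundedness}, with the interpolation parameter calibrated to the inner $L^r$ average in the sharp function. Arranging this interpolation so that both summands of $\gamma$ emerge cleanly --- with no hidden $N$-loss from the good-$\lambda$ step or from failure of orthogonality among the modulated kernels --- is the delicate part of the proof.
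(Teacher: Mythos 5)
Your overall architecture (a sharp maximal function measuring oscillation against the $N$-dimensional space spanned by $(e^{i\xi_j\cdot})_j$, a Fefferman--Stein inequality for it, and a pointwise domination of the sharp function of $Tf$ by $\M_s f$) is exactly the paper's, but you have misplaced the source of the factor $N^{tp/(s\min\{2,s\})}$, and in doing so you omit the one genuinely new ingredient. You assert that $\|Tf\|_{L^p(\omega)}\lesssim\|M^{\sharp}_{\Theta,r}(Tf)\|_{L^p(\omega)}$ holds ``with a constant independent of $N$''. It does not: in the good-$\lambda$ argument one must control $\sup_{Q}|\pr_{\Theta,Q}(f\mathbf{1}_Q)|$ by an $L^s$-average of $f$ over $Q$, and because the approximating subspace is $N$-dimensional this costs $N^{\max\{1/2,1/s\}}$ by the Nikolskii-type inequality of Borwein--Erd\'elyi (Lemma \ref{lem}); fed into the Auscher--Martell good-$\lambda$ theorem as the parameter $a$ and raised to the power $tp$ dictated by $\omega\in RH_{t'}$, this produces exactly the summand $\frac{tp}{s\min\{2,s\}}$ of $\gamma$. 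Without Lemma \ref{lem} the Fefferman--Stein step for this sharp function is simply not established --- the classical argument for constant approximations does not transfer to an $N$-dimensional subspace for free. Correspondingly, your claim that the \emph{local} part of the pointwise estimate ``forces the multiplicative loss $N^{tp/(s\min\{2,s\})}$'' by some interpolation is a misattribution: in the paper the local part contributes only $N^{|1/2-1/s|}$, directly from the unweighted $L^s$ bound of Theorem \ref{thm:boundedness}.

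Second, the obstacle you single out on the far part is not there. The regularity hypothesis is already summed over $j$: $\sum_{j=1}^N\bigl|\nabla_{(x,y)}\,e^{i\xi_j\cdot(x-y)}K_j(x,y)\bigr|\lesssim|x-y|^{-n-1}$. Telescoping each $\widetilde{K}_j(x,y)-\widetilde{K}_j(c(Q),y)$ to the center of $Q$ and summing over $j$ therefore yields a remainder $\lesssim \M f(x)$ with no power of $N$ at all. There is no ``naive $N$'' to remove, and the unspecified interpolation you defer to as ``the delicate part'' is neither needed nor, as described, a workable step. As written, your proof is missing the actual delicate point (Lemma \ref{lem} inside the good-$\lambda$ argument, with the dependence on the constant $a$ tracked through the Auscher--Martell machinery) and substitutes for it a step that would not close.
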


\bigskip The current paper is organized as follows: after some preliminaries about weights, examples of multi-frequency operators and the main lemma for the multi-frequency analysis, Theorem \ref{thm:boundedness} is proved in Section \ref{sec1}. Then in Section \ref{sec2}, we develop the general approach for weighted estimates, based on a suitable maximal sharp function. In Section \ref{sec3}, we describe how this point of view could be used to Bochner-Riesz multipliers.

\section{Notations and preliminaries}

Let us consider the Euclidean space $\rn$  equipped with the Lebesgue measure $dx$ and its Euclidean distance $|x-y|$. Given a ball $Q\subset\rn$ we denote its center by $c(Q)$ and its radius by $r_Q$. For any $\lambda>1$, we denote by $\lambda\,Q:=B(c(Q),\lambda r_Q)$.
We write $L^p$ for $L^p(\rn,\re)$ or $L^p(\rn,{\mathbb C})$. For a subset $E\subset \rn$ of finite and non-vanishing measure and $f$ a locally integrable function, the average
of $f$  on $E$ is defined by $$ \aver{E} f dx := \frac{1}{|E|}\int_E f(x) dx.$$
Let us denote by ${\mathcal Q}$ the collection of all balls in $\rn$. We write $\M$ for the maximal Hardy-Littlewood function:
$$
{\mathcal M} f(x)= \sup_{{\genfrac{}{}{0pt}{}{Q\in{\mathcal Q}}{x\in Q}}}  \aver{Q}|f|dx.$$

For $p\in (1,\infty)$, we set $\M_p f(x)=\M(|f|^p)(x)^{1/p}$. The Fourier transform will be denoted by $\mathcal F$ as an operator and we make use of the other usual  notation ${\mathcal F}(f)=\widehat{f}$ too.

\mb

In the current work, we aim to develop a multi-frequency analysis, based on the following lemma:
\begin{lemma}[\cite{BE}] \label{lem} Let $\Theta\subset \R^n$ be a finite collection of frequencies and $Q$ be a ball. For every function $\phi$ belonging to the subspace of $L^2(3Q)$, spanned by $(e^{i\xi\cdot})_{\xi\in \Theta}$, we have for $p\in[1,2]$
\begin{equation}  \|\phi\|_{L^\infty(Q)} \lesssim (\sharp \Theta)^{\frac{1}{p}} \left(\aver{3Q} |\phi|^p dx \right)^{\frac{1}{p}}. \label{aze} \end{equation}
\end{lemma}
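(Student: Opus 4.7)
The plan is to prove the core case $p=2$ via reproducing kernels, then recover the whole range $p\in[1,2]$ by interpolation against the dual $p=1$ endpoint. For $p=2$, let $V$ denote the subspace of $L^2(3Q)$ in question, of dimension $d\le N:=\sharp\Theta$; fix an $L^2(3Q)$-orthonormal basis $\{\psi_j\}_{j=1}^{d}$ of $V$, and set $K(x,y):=\sum_{j=1}^{d}\psi_j(x)\overline{\psi_j(y)}$. The reproducing identity $\phi(x_0)=\int_{3Q}K(x_0,y)\phi(y)\,dy$ for $\phi\in V$ combined with Cauchy--Schwarz yields
\[
|\phi(x_0)|^{2}\le K(x_0,x_0)\,\|\phi\|_{L^2(3Q)}^{2},
\]
so the case $p=2$ reduces to the pointwise kernel bound
\begin{equation}
\sup_{x_0\in Q}K(x_0,x_0)\lesssim \frac{N}{|3Q|}. \tag{$\ast$}
\end{equation}
An integrated form of $(\ast)$ is immediate from the trace identity $\int_{3Q}K(y,y)\,dy=d\le N$; the pointwise upgrade uses that $K(y,y)=\sum_{j}|\psi_j(y)|^{2}$ is a non-negative exponential polynomial with frequencies in $\Theta-\Theta$, together with the buffer between $Q$ and $3Q$, to produce a sub-mean-value comparison $K(x_0,x_0)\lesssim\aver{3Q}K(y,y)\,dy$ uniformly in $x_0\in Q$ and in $\Theta$. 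After a translation/dilation normalising $Q=B(0,1)$ and $x_0=0$, and invoking the modulation symmetry $\phi\mapsto e^{-i\xi_*\cdot}\phi$ to place $0\in\Theta$, the desired comparison follows from the analyticity of exponential polynomials.

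For the full range $p\in[1,2]$, view $\mathrm{ev}_{x_0}:\phi\mapsto\phi(x_0)$ as a linear functional on the finite-dimensional space $V$. The $L^2$-estimate just obtained gives $\|\mathrm{ev}_{x_0}\|_{V\subset L^2(3Q)\to\com}\lesssim(N/|3Q|)^{1/2}$; the analogous $L^1$ endpoint $\|\mathrm{ev}_{x_0}\|_{V\subset L^1(3Q)\to\com}\lesssim N/|3Q|$ is produced dually by constructing a representer $g\in L^\infty(3Q)$ with $\|g\|_{L^\infty}\lesssim N/|3Q|$ and $\int_{3Q}g(y)e^{i\xi y}\,dy=e^{i\xi x_0}$ for every $\xi\in\Theta$, obtained by applying $(\ast)$ to a ball strictly contained in $3Q$ to retain a buffer on the other side. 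Riesz--Thorin interpolation of this linear functional then delivers
\[
|\phi(x_0)|\lesssim\Bigl(\tfrac{N}{|3Q|}\Bigr)^{\!1/p}\|\phi\|_{L^p(3Q)}=N^{1/p}\Bigl(\aver{3Q}|\phi|^{p}\,dx\Bigr)^{\!1/p},
\]
which is \eqref{aze}.

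The main obstacle is the pointwise kernel estimate $(\ast)$: the trace identity gives only the integrated version, and the upgrade to a pointwise bound genuinely uses the analytic/exponential-polynomial structure of $V$---a generic finite-dimensional subspace of $L^2(3Q)$ would not satisfy such a comparison, as its reproducing kernel can concentrate near the boundary of the domain (as happens, for instance, for high-degree orthogonal polynomials).
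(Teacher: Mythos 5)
The paper does not actually prove this lemma; it quotes it from \cite{BE} and adds only the remark that the argument ``relies on the additive group structure of the ambient space by using translation operators.'' That remark names precisely the ingredient your proof is missing. Your skeleton for $p=2$ (reproducing kernel, Cauchy--Schwarz, trace identity) is the standard and correct one, and you have rightly isolated the crux as the pointwise diagonal bound $(\ast)$. But your justification of $(\ast)$ --- that $K(y,y)$ is a non-negative exponential polynomial with spectrum in $\Theta-\Theta$, and that analyticity plus the buffer yields a sub-mean-value comparison uniformly in $\Theta$ --- is not valid. A non-negative exponential polynomial with spectrum in $\Theta-\Theta$ can concentrate at an \emph{interior} point by a factor of $\sharp\Theta$: for $\Theta=\{1,\dots,N\}$ the Fej\'er-type function $F(x)=|\sum_{j=1}^N e^{ijx}|^2$ has $F(0)=N^2$ while its average over $[-\pi,\pi]$ is $N$, and the peak sits at the centre, so no buffer helps; analyticity is likewise useless here (Tur\'an/Remez-type bounds for exponential sums carry constants exponential in $N$). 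What actually forces $(\ast)$ is the \emph{shift-invariance} of $V$: if $u$ is an extremiser with $\|u\|_{L^2(3Q)}\le 1$ and $|u(x_0)|^2=K_{3Q}(x_0,x_0)=:A$, then for every $|h|\le r_Q$ the translate $u(\cdot-h)$ again lies in $V$, satisfies $\|u(\cdot-h)\|_{L^2(2Q)}\le\|u\|_{L^2(3Q)}\le 1$ (this is where the buffer enters, since $2Q-h\subset 3Q$), and attains the value $u(x_0)$ at $x_0+h\in 2Q$. Hence $K_{2Q}(s,s)\ge A$ on the set $B(x_0,r_Q)$ of measure $\gtrsim|Q|$, and the trace identity $\int_{2Q}K_{2Q}(s,s)\,ds\le N$ gives $A\lesssim N/|3Q|$. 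You never invoke translation invariance (modulation invariance, which you do invoke, moves $\Theta$ in frequency and does nothing to compare different spatial points), so $(\ast)$ remains unproved and the proof does not close.

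A secondary gap sits in your reduction to $p\in[1,2)$: the $L^1$ endpoint requires a representer $g$ with $\sup_{y\in 3Q}|g(y)|\lesssim N/|3Q|$, which amounts to the off-diagonal bound $|K(x_0,y)|\le K(x_0,x_0)^{1/2}K(y,y)^{1/2}\lesssim N/|3Q|$ for \emph{all} $y\in 3Q$; near $\partial(3Q)$ there is no buffer and the diagonal bound genuinely fails, so ``applying $(\ast)$ to a smaller ball'' does not produce such a $g$. The robust route (the one used in \cite{BE} and \cite{NOT}) is self-improvement: run the $p=2$ estimate on small balls $B(y,r)$ with $B(y,3r)\subset 3Q$, write $\aver{B(y,3r)}|\phi|^2\le\|\phi\|_{L^\infty(B(y,3r))}^{2-p}\aver{B(y,3r)}|\phi|^p\cdot(\text{normalisation})$ by H\"older, and absorb the $L^\infty$ factor by evaluating at a near-maximal point; no duality or Riesz--Thorin is needed.
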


\begin{remark} In \cite{BE}, this lemma is stated and proved in a one-dimensional setting. However, the proof only relies on the additive group structure of the ambient space by using translation operators. So the exact same proof can be extended to a multi-dimensional setting.
\end{remark}

\begin{remark} \label{rem:lem} The question of extending the previous lemma for $p\in(2,\infty)$ is still open in such a general situation. Of course, (\ref{aze}) is true for $p=\infty$ and so it would be reasonable to expect the result for intermediate exponents $p \in(2,\infty)$. Unfortunately, the well-known interpolation theory does not apply here. \\
However, in some specific situations, we may extend this lemma for $p\geq 2$. Indeed, if $p=2k$ is an even integer then applying (\ref{aze}) with $p=2$ and $\Theta^k:=\{\theta_{i_1}+...+\theta_{i_k},\ \theta_i\in \Theta\}$ to $\phi^k$ yields
\begin{align*}
 \|\phi \|_{L^\infty(Q)} & \lesssim  \|\phi^k  \|_{L^\infty(Q)}^{\frac{1}{k}}  \\
 &  \lesssim (\sharp \Theta^k )^{\frac{1}{2k}} \left(\aver{3Q} |\phi|^{2k} dx \right)^{\frac{1}{2k}}  \\
 & \simeq (\sharp \Theta^k )^{\frac{1}{p}} \left(\aver{3Q} |\phi|^{p} dx \right)^{\frac{1}{p}}.  
\end{align*}
By this way, we see that an extension of (\ref{aze}) for $p\geq 2$ may be related to sharp combinatorial arguments, to estimate $\sharp \Theta^k$ (a trivial bound is $\sharp \Theta^k \leq (\sharp \Theta)^{k}$ which does not improve (\ref{aze})). 
\end{remark}

We aim to obtain weighted estimates, involving Muckenhoupt's weights. 

\begin{dfn} \label{def:1} A weight $\omega$ is a non-negative locally integrable function.
We say that a weight $\omega\in {\mathbb A}_p$, $1<p<\infty$, if there exists a positive constant $C$ such that for every ball $Q$,
$$\bigg(\aver{Q} \omega\,dx\bigg)\, \bigg(\aver{Q} \omega^{1-p'}\,dx\bigg)^{p-1}\le C.
$$
For $p=1$, we say that $\omega\in {\mathbb A}_1$ if there is a positive constant $C$ such
that for every ball $Q$,
$$
\aver{Q} \omega \,dx
\le
C\, \omega (y),
\qquad \mbox{for a.e. }y\in Q.
$$
We write ${\mathbb A}_\infty=\cup_{p\ge 1} {\mathbb A}_p$.
\end{dfn}

We just recall that for $p\in(1,\infty)$, the maximal function $\M$ is bounded on $L^p(\omega)$ if and only if $\omega \in {\mathbb A}_p$.
We also need to introduce the reverse H\"older classes. 

\begin{dfn} \label{def:2} A weight $\omega \in RH_p$, $1<p<\infty$, if there is a constant $C$ such that for every ball $Q$,
$$
\bigg( \aver{Q} \omega^p\,dx \bigg)^{1/p} \leq C \bigg(\aver{Q} \omega\,dx\bigg).
$$
It is well known that ${\mathbb A}_\infty=\cup_{r>1} RH_r$. Thus, for $p=1$ it is understood that $RH_1={\mathbb A}_\infty$.
\end{dfn}



\subsection{Examples of multi-frequency Calder\'on-Zygmund operators}

Let us detail particular situations where such multi-frequency operators appear.

\subsubsection*{The multi-frequency Hilbert transform}

As explained in the introduction, an example of such multi-frequency operators in the $1$-dimensional setting is the multi-frequency Hilbert transform. In $\R$, consider an arbitrary collection of frequencies $\Theta:=(\xi_1,...,\xi_N)$ (assumed to be indexed by the increasing order $\xi_1< \xi_2< \cdots < \xi_N$). The associated multi-frequency Hilbert transform is the Fourier multiplier corresponding to the symbol
$$ m(\xi) = \left\{ \begin{array}{ll}
                     -1, & \xi<\xi_1 \\
                      (-1)^{j+1}, & \xi_j<\xi<\xi_{j+1} \\
                      (-1)^{N+1}, & \xi>\xi_N.
                    \end{array} \right. $$
Associated to $\Theta$, we have a collection of disjoint intervals $\Delta:=\{ (-\infty,\xi_1), (\xi_1,\xi_2),...,(\xi_N,\infty)\}$. It is well-known by Rubio de Francia's work \cite{Rubio} that for $q\in(1,2]$, the functional 
\begin{equation} f \rightarrow \left( \sum_{\omega\in\Delta} \left|{\mathcal F}^{-1}[{\bf 1}_{\omega} {\mathcal F}f]\right|^q \right)^{\frac{1}{q}} \label{eq:f} \end{equation}
is bounded on $L^p$ for $p\in(q',\infty)$. 

The boundedness of the multi-frequency Hilbert transform is closely related to the understanding of (\ref{eq:f}) for $q\to 1$.

\mb

We point out that in Rubio de Francia's result, the obtained estimates do not depend on the collection of intervals $\Delta$. More precisely, excepted the end-point $p=q'$, the range $(q',\infty)$ is optimal for a uniform (with respect to the collection $\Delta$) $L^p$-boundedness of (\ref{eq:f}). So it is natural that for $q\to 1$ things are more difficult, which is illustrated by our multi-frequency Calder\'on-Zygmund analysis. Indeed, for example if one considers the particular case $\Theta:=(1,...,N)$, then following the notations of Remark \ref{rem:lem}, we have $\Theta^k =\{k,...,kN\}$ and so $\sharp \Theta^k = k(N-1)+1 \simeq k N$. Hence, in this situation we have observed (see Remark \ref{rem:lem}) that we can extend Lemma \ref{lem} to exponents $p\in[1,\infty]$ (the implicit constant appearing in (\ref{aze}) is only depending on $p$). By this way, Theorem \ref{thm:weight} can be improved and we obtain a better exponent
$$ \gamma=\frac{tp}{s^2}+\left|\frac{1}{2}-\frac{1}{s}\right|.$$
Consequently, it seems that for the $L^p$-boundedness of the multi-frequency Hilbert transform, the collection $\Theta$ could play an important role (which was not the case for the $\ell^q$-functional (\ref{eq:f}) with $q'<p$).

\subsubsection*{Multi-frequency operators coming from a covering of the frequency space}

Let $(Q_j)_{j=1,...,N}$ be a family of disjoint cubes and $\phi_j$ a smooth function with $\widehat{\phi_j}$ supported and adapted to $Q_j$. Then consider the linear operator given by
$$ T(f) = \sum_{j=1}^N \phi_j \ast f.$$

It is easy to check that $T$ is a multi-frequency Calder\'on-Zygmund operator, associated to the collection $\Theta:=(\xi_1,...,\xi_N)$ where for every $j$, $\xi_j:=c(Q_j)$ is the center of the ball $Q_j$. With $r_j$ the radius of $Q_j$, we have the regularity estimate
$$ \sum_{j=1}^N \left|\nabla_{(x,y)}\ e^{i\xi_j\cdot(x-y)} \phi_j(x-y)\right| \lesssim |x-y|^{-n-1} \sum_{j=1}^N \frac{(r_j |x-y|)^{n+1}}{(1+r_j|x-y|)^{M}},$$
for every integer $M>0$.

So boundedness of $T$ (Theorem \ref{thm:boundedness}) yields the inequality

\begin{equation} \left\| \sum_{j=1}^N \phi_j \ast f \right\|_{L^p} \lesssim C(r_1,...,r_N) N^{\left|\frac{1}{p}-\frac{1}{2}\right|} \|f\|_{L^p}, \label{eq:ex} \end{equation}
with 
$$ C(r_1,...,r_N):= \sup_{t>0} \sum_{j=1}^N \frac{(r_j t)^{n+1}}{(1+r_jt)^{M}}.$$

Let us examine some particular situations:
\begin{itemize} 
 \item If the cubes $(Q_j)_j$ have an equal side-length, then as for Proposition \ref{prop}, simple arguments imply (\ref{eq:ex}) for $p\in[1,\infty]$ without the constant $C(r_1,...,r_N)$. 
 \item If the collection $(Q_j)_j$ is dyadic: it exists a point $\xi_0$, $d(Q_j,\xi_0) \simeq r_{Q_j} \simeq 2^j$ then Littlewood-Paley theory implies (\ref{eq:ex}) without the factor $N^{|\frac{1}{p}-\frac{1}{2}|}$ (in this case $C(r_1,...,r_N)\simeq 1$).
 \item If the cubes $(Q_j)$ have only the dyadic scale: $r_{Q_j} \simeq 2^j$ (but no assumptions on the centers of the balls) then Littlewood-Paley theory cannot be used. However, our previous results can be applied in this situation and so (\ref{eq:ex}) holds and $C(r_1,...,r_N) \simeq 1$.
\end{itemize}

We aim to use the new multi-frequency Calder\'on-Zygmund analysis to extend these inequalities with replacing the convolution operators by more general Calder\'on-Zygmund operators, still satisfying some orthogonality properties.

\subsubsection*{Multi-frequency operators coming from variation norm estimates}

As explained in the introduction, the multi-frequency Calder\'on-Zygmund analysis has been first developed for proving a variation norm variant of a Bourgain's maximal inequality.
So our results can be adapted in such a framework. For example, in \cite{GMS} Grafakos, Martell and Soria have studied maximal inequalities of the form
$$ \left\| \sup_{j=1,...,N} \left| T( e^{i\theta_j\cdot} f) \right| \right\|_{L^p} \lesssim \|f\|_{L^p}$$
where $(\theta_j)_{j=1,...,N}$ is a collection of frequencies and $T$ a fixed Calder\'on-Zygmund operator.

We can ask the same question, for a variation norm variant: for $q\in[1,\infty)$ consider
 $$ \left(\sum_{j=1}^N \left|  T( e^{i\theta_j\cdot} f) \right|^q \right)^{\frac{1}{q}}$$
and study its boundedness on $L^p$, with a sharp control of the behaviour with respect to $N$. By a linearization argument (involving Rademacher's functions), this $\ell^q$-functional can be realized as an average of modulated Calder\'on-Zygmund operators, associated to the collection $\Theta:=(\theta_j)_j$.

\section{Unweighted estimates for multi-frequency Calder\'on-Zygmund operators} \label{sec1}

In this section, we aim to prove the weak $L^1$-estimate for a multi-frequency Calder\'on-Zygmund operator, then Theorem \ref{thm:boundedness} will easily follow from interpolation and duality.

\begin{proposition} \label{prop:bound} Let $\Theta=(\xi_1,...,\xi_N)$ be a collection of $N$ frequencies as above and $T$ be a Calder\'on-Zygmund operator relatively to $\Theta$. Then $T$ is of weak type $(1,1)$ with (uniformly with respect to $N$)
$$ \|T\|_{L^1\to L^{1,\infty}} \lesssim N^{\frac{1}{2}}.$$
\end{proposition}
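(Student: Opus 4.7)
The plan is to implement a multi-frequency Calder\'on-Zygmund decomposition at a rescaled level dictated by $N$, and then to exploit the vanishing-moment structure of the bad parts against the regularity of the modulated kernels.

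Given $f\in L^1$ and $\alpha>0$, I would start by running the standard Calder\'on-Zygmund stopping-time at level $\alpha/\sqrt{N}$ to produce pairwise disjoint balls $(Q_k)_k$ satisfying $\aver{Q_k}|f|\,dx\simeq \alpha/\sqrt{N}$, $|f|\leq \alpha/\sqrt N$ a.e.\ on $E:=\R^n\setminus\bigcup_k Q_k$, and $\sum_k|Q_k|\lesssim \sqrt{N}\,\alpha^{-1}\|f\|_1$. On each $Q_k$, denote by $V_k$ the (at most) $N$-dimensional subspace of $L^2(Q_k)$ spanned by the characters $\{e^{-i\xi_j\cdot}\mathbf 1_{Q_k}\}_{j=1}^N$ and by $P_k$ the orthogonal projection onto $V_k$. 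Setting $f_k:=f\mathbf 1_{Q_k}$, $b_k:=f_k-P_kf_k$ and $g:=f\mathbf 1_E+\sum_k P_kf_k$, each $b_k$ is supported in $Q_k$ and enjoys the multi-frequency cancellation
$$\int_{Q_k}b_k(y)\,e^{i\xi_j\cdot y}\,dy=0,\qquad j=1,\ldots,N.$$

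The central quantitative input, obtained as in \cite{NOT} by combining the identity $\|P_kf_k\|_{L^2(Q_k)}^2=\langle P_kf_k,f_k\rangle\leq \|P_kf_k\|_{L^\infty(Q_k)}\|f_k\|_1$ with Lemma \ref{lem} at $p=2$ applied to the exponential-sum extension of $P_kf_k$, is the $L^2$ estimate on the good part
$$\|g\|_2^2\lesssim \sqrt{N}\,\alpha\,\|f\|_1,$$
the factor $\sqrt N$ arising precisely from the constant provided by Lemma \ref{lem} at $p=2$. From this and Chebyshev's inequality combined with the $L^2$-boundedness of $T$, one deduces $|\{|Tg|>\alpha\}|\lesssim \sqrt N\,\alpha^{-1}\|f\|_1$. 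The same $L^2$ bound also yields (by Cauchy--Schwarz) the size control $\|b_k\|_1\lesssim \alpha\,|Q_k|$.

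For the bad part, fix $x\notin 2Q_k$ and set $\tilde K_j(x,y):=e^{i\xi_j(x-y)}K_j(x,y)$ (so that $K_j(x,y)=e^{-i\xi_j x}e^{i\xi_j y}\tilde K_j(x,y)$) and $y_k:=c(Q_k)$. The cancellation on $b_k$ allows one to subtract $\tilde K_j(x,y_k)$ inside the integral:
$$T_jb_k(x)=e^{-i\xi_j x}\int_{Q_k}e^{i\xi_j y}\bigl[\tilde K_j(x,y)-\tilde K_j(x,y_k)\bigr]\,b_k(y)\,dy.$$
Summing in $j$ with the regularity hypothesis $\sum_j|\nabla_{(x,y)}\tilde K_j(x,y)|\lesssim|x-y|^{-n-1}$ yields $\sum_j|T_jb_k(x)|\lesssim r_{Q_k}\,|x-y_k|^{-n-1}\,\|b_k\|_1$, whose integral over $(2Q_k)^c$ is $\lesssim\|b_k\|_1$. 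Summing over $k$,
$$\Bigl|\Bigl\{x\notin \bigcup_k 2Q_k:|Tb(x)|>\alpha\Bigr\}\Bigr|\lesssim \alpha^{-1}\sum_k\|b_k\|_1\lesssim \sqrt N\,\alpha^{-1}\|f\|_1,$$
and the trivial bound $|\bigcup_k 2Q_k|\lesssim \sqrt N\,\alpha^{-1}\|f\|_1$ closes the argument.

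The main obstacle is the sharp $L^2$-bound on $g$: extracting the factor $\sqrt N$ (rather than the trivial $N$ obtained by a naive estimate) relies crucially on Lemma \ref{lem} at $p=2$, and the choice of Calder\'on-Zygmund level $\alpha/\sqrt N$ is dictated by the need to balance the three level-set contributions $|\bigcup_k 2Q_k|$, $\alpha^{-2}\|g\|_2^2$ and $\alpha^{-1}\sum_k\|b_k\|_1$ to the same order $\sqrt N\,\alpha^{-1}\|f\|_1$.
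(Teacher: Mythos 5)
Your overall strategy coincides with the paper's: the paper invokes the multi-frequency Calder\'on--Zygmund decomposition of \cite{NOT} as a black box (stopping time at level $\lambda/\sqrt N$, the bound $\|g\|_{L^2}^2\lesssim\sqrt N\,\lambda\|f\|_{L^1}$ obtained from Lemma \ref{lem} at $p=2$, cancellation of each $b_J$ at every frequency of $\Theta$), and then treats the bad part exactly as you do, by subtracting the value of the modulated kernel at the center of each ball and using the summed gradient hypothesis. The good-part estimate, the measure of the dilated balls, and the off-support $L^1$ bound are identical to the paper's.

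There is, however, a genuine flaw in your explicit construction of the decomposition. You take $P_k$ to be the orthogonal projection in $L^2(Q_k)$ and close the estimate via $\|P_kf_k\|_{L^2(Q_k)}^2=\langle P_kf_k,f_k\rangle\le\|P_kf_k\|_{L^\infty(Q_k)}\|f_k\|_{L^1}$ together with Lemma \ref{lem} at $p=2$. But the lemma controls $\|\phi\|_{L^\infty(Q)}$ by $(\sharp\Theta)^{1/2}\bigl(\aver{3Q}|\phi|^2dx\bigr)^{1/2}$, i.e.\ by the $L^2$ average over the \emph{enlarged} ball, and this inner/outer distinction is essential: over the full ball the sharp $L^2\to L^\infty$ Nikolskii constant for an $N$-term exponential sum degrades to order $N$ rather than $N^{1/2}$ (already for clustered frequencies, where $\phi$ degenerates to a polynomial of degree $N-1$ and the Christoffel function at an endpoint is of size $N^{-2}|Q|$). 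With your choice of projection the chain of inequalities only yields $\|P_kf_k\|_{L^2(Q_k)}\lesssim N|Q_k|^{-1/2}\|f_k\|_{L^1}$, hence $\|g\|_{L^2}^2\lesssim N^{3/2}\lambda\|f\|_{L^1}$, which destroys the $\sqrt N$ in the final weak-type bound. The repair is the one made in \cite{NOT}, and it is visible in the properties the paper lists (each $b_J$ is supported in $3J$, and the $3J$ have bounded overlap): define $P_k$ as the orthogonal projection in $L^2(3Q_k)$ onto the span of the characters. Then
$$\|P_kf_k\|_{L^2(3Q_k)}^2=\langle P_kf_k,f{\bf 1}_{Q_k}\rangle_{L^2(3Q_k)}\le\|P_kf_k\|_{L^\infty(Q_k)}\|f_k\|_{L^1}\lesssim \sqrt N\,|Q_k|^{-1/2}\|P_kf_k\|_{L^2(3Q_k)}\|f_k\|_{L^1},$$
which closes correctly and gives $\|P_kf_k\|_{L^2(3Q_k)}\lesssim|Q_k|^{1/2}\lambda$. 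The cancellation $\widehat{b_k}(\xi_j)=0$ still holds since $e^{-i\xi_j\cdot}$ belongs to the range of $P_k$, $b_k$ is now supported in $3Q_k$, and your bad-part computation goes through verbatim after replacing $(2Q_k)^c$ by $(4Q_k)^c$ and using the bounded overlap of the balls $3Q_k$.
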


\begin{proof} Consider $f$ a function in $L^1$ and $\lambda>0$, we use the Calder\'on-Zygmund decomposition\footnote{In \cite{NOT}, the multi-frequency Calder\'on-Zygmund decomposition is only described in $\R$. The proof is a combination of Lemma \ref{lem} and the usual Calder\'on-Zygmund decomposition. Since both of them can be extended in a multi-dimensional framework, the multi-frequency Calder\'on-Zygmund decomposition performed in \cite{NOT} still holds in $\R^n$.} of \cite{NOT} related to the collection of frequencies $\Theta$. So the function $f$ can be decomposed $f = g + \sum_{J\in {\bf J}} b_J$ with the following properties:
\begin{itemize}
 \item ${\bf J}$ is a collection of balls and $(3J)_{J\in{\bf J}}$ has a bounded overlap;
 \item for each $J\in{\bf J}$, $b_J$ is supported in $3J$;  
 \item we have 
\begin{equation} \sum_{J \in{\bf J}} |J| \lesssim \sqrt{N} \|f\|_{L^1} \lambda^{-1}; \label{eq:sumJ} \end{equation}
\item the ``good part'' $g$ satisfies
\begin{equation} \|g\|_{L^2}^2 \lesssim \|f\|_{L^1} \sqrt{N} \lambda; \label{eq:g} \end{equation}
\item the cubes $J$ satisfy
\begin{equation} \|f\|_{L^1(J)} \lesssim |J|\lambda N^{-\frac{1}{2}} , \quad \|f-b_J\|_{L^2(J)} \lesssim \sqrt{|J|} \lambda; \label{eq:b} \end{equation}
\item we have cancellation for all the frequencies of $\Theta$: for all $j=1,...,N$ and $J\in{\bf J}$, $\widehat{b_J}(\xi_j)=0$.
\end{itemize}
We aim to estimate the measure of the level-set
$$ \Upsilon_\lambda:=\left\{x, |T(f)(x)|>\lambda\right\}.$$
With $b=\sum_J b_J$, we have
\begin{align*}
 |\Upsilon_\lambda| & \leq \left|\left\{x, |T(g)(x)|>\lambda/2\right\}\right| + \left|\left\{x, |T(b)(x)|>\lambda/2\right\}\right| \\
 & \lesssim \lambda^{-2} \|T(g)\|_{L^2}^2 +  \left|\left\{x, |T(b)(x)|>\lambda/2\right\}\right| \\
 & \lesssim \lambda^{-1}\sqrt{N}\|f\|_{L^1} + \left|\left\{x, |T(b)(x)|>\lambda/2\right\}\right|,
\end{align*}
where we used the $L^2$-boundedness of $T$. So it remains us to study the last term.
Since (\ref{eq:sumJ}), we get
$$ \left| \bigcup_{J\in {\bf J}} 4J \right| \lesssim \sum_{J} |J| \lesssim  \sqrt{N} \|f\|_{L^1} \lambda^{-1}.$$
Consequently, it only remains to estimate the measure of the set
$$ O_\lambda:= \left\{x \in \left(\bigcup_{J\in {\bf J}} 4J\right)^{c}, \quad |T(b)(x)|>\lambda/2\right\}.$$
Since
\begin{equation} 
 |O_\lambda| \lesssim \lambda^{-1} \sum_{J} \|T(b_J)\|_{L^1((2J)^c)}, \label{eq:Olambda}
\end{equation}
it is sufficient to estimate the $L^1$-norms.
Consider $K$ the kernel of $T$ and a point $x_0\in \left(\bigcup_{J\in {\bf J}} 4J\right)^{c}$.
Then, we can use the integral representation and we have
$$ T(b)(x_0) = \int K(x_0,y) b(y) dy = \sum_{J} \int_{3J} K(x_0,y) b_J(y) dy.$$
To each $J$, we aim to take advantage of the cancellation properties of $b_J$, so we subtract the projection of $\left[y\to K(x_0,y)\right]$ on the space, spanned by $(e^{iy\cdot\eta})_{\eta\in \Theta}$. So we have
\begin{align*}
 T(b)(x_0) & = \sum_{J}\sum_{j=1}^N  \int_{3J} \left[K_j(x_0,y) - e^{i\xi_j \cdot c(J)} K_j(x_0,c(J))e^{-i\xi_j \cdot y}\right] b_J(y) dy \\
 & = \sum_{J}\sum_{j=1}^N  \int_{3J} \left[\widetilde{K}_{j}(x_0,y) - \widetilde{K}_j(x_0,c(J))\right]e^{i\xi_j\cdot (x_0-y)} b_J(y) dy
\end{align*}
where $c(J)$ is the center of $J$ and $\widetilde{K}_j(x,y):=K_j(x,y)e^{-i\xi_j \cdot (x-y)}$.
We then write 
$$ T_j(b)(x_0) :=  \int \left[\widetilde{K}_{j}(x_0,y) - \widetilde{K}_j(x_0,c(J))\right]e^{i\xi_j \cdot (x_0-y)} b(y) dy.$$
such that $T(b) =\sum_j T_j(b)$. Due to the regularity assumption on $K$ (and so on $\widetilde{K}_j$), it comes for $y\in J$ and $x_0\in (2J)^c$
\begin{equation} 
\sum_{j=1}^N \left|\widetilde{K}_{j}(x_0,y) - \widetilde{K}_j(x_0,c(J))\right| \lesssim \frac{r_J}{|x_0-y|^{n+1}}. \label{eq:kernelbis} \end{equation}
So we have
$$ \|T(b_J)\|_{L^1((2J)^c)} \lesssim \iint_{|x-y|\geq r_J} \frac{r_J}{|x-y|^{n+1}} |b_J(y)| dxdy \lesssim \|b_J\|_{L^1} \lesssim |J| \lambda.$$
Finally, we obtain with (\ref{eq:Olambda}) that
$$ |O_\lambda| \lesssim \sum_{J} |J| \lesssim  \sqrt{N} \|f\|_{L^1} \lambda^{-1}, $$
which concludes the proof.
\end{proof}

\begin{remark}
 Following \cite{NOT}, the bound of order $N^{\frac{1}{2}}$ is optimal for the multi-frequency decomposition and for the weak-$L^1$ estimate.
\end{remark}

\section{Weighted estimates for multi-frequency Calder\'on-Zygmund operators} \label{sec2}

Aiming to obtain weighted estimates on such multi-frequency operators (using {\it Good-lambda inequalities}), we also have to define a suitable maximal sharp function, associated to a collection of frequencies.

\begin{dfn}[Maximal sharp function] \label{def:max} Let $\Theta$ be a collection of $N$ frequencies and $s\in[1,\infty)$. Consider a ball $Q$, we denote by $\pr_{\Theta,Q}$ the projection operator (in the $L^s$-sense) on the subspace of $L^s(3Q)$, spanned by $(\exp {i\xi\cdot})_{\xi\in \Theta}$. Let us specify this projection operator: consider $E$ the finite dimensional sub-space of $L^s(3Q)$, spanned by $(e^{i\xi\cdot})_{\xi\in \Theta}$ and equipped with the $L^s(3Q)$-norm.
Since $E$ is of finite dimension, then for every $f\in L^s(Q)$ there exists $v:=\pr_{\Theta,Q}(f)\in E$ such that
$$ \|f-v\|_{L^s(3Q)} = \inf_{\phi\in E} \|f-\phi\|_{L^s(3Q)}.$$
This projection operator may depend on $s$, which is not important for our purpose so this is implicit in the notation and we forget it.

\mb
Since $0\in E$, we obviously have 
\begin{equation} \label{eq:projection}\|\pr_{\Theta,Q}(f)\|_{L^s(3Q)} \leq 2\|f\|_{L^s(Q)}. \end{equation} Then, we may define the maximal sharp function 
$$ \M_{s,\Theta}^\sharp (f)(x_0) := \sup_{x_0\in Q} \left(\aver{Q} \left|f-\pr_{\Theta,Q}(f{\bf 1}_Q) \right|^s  dx \right)^{\frac{1}{s}}.$$
\end{dfn}

Note that the usual sharp maximal function is the one obtained for $\Theta:=\{0\}$ and in this situation it is well-known that the maximal sharp function satisfies a so-called Fefferman-Stein inequality (see \cite{FS}). We first prove an equivalent property for this generalised maximal sharp function:

\begin{proposition} \label{prop:FeffermanStein} Let $s\in(1,\infty)$, $t\in[1,\infty)$ and $p \in(s,\infty)$ be fixed. Then for every function $f\in L^s$ and every weight $\omega\in RH_{t'}$, we have for every $p\geq s$
$$ \|f\|_{L^p(\omega)} \lesssim N^{\frac{tp}{s}\max\{\frac{1}{2},\frac{1}{s}\}} \left\| \M_{s,\Theta}^\sharp(f) \right\|_{L^p(\omega)}.$$
\end{proposition}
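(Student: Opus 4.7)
The plan is to adapt the classical Fefferman--Stein good-$\lambda$ argument, replacing the approximation by constants with the frequency-polynomial approximation $\Pi_Q f := \pr_{\Theta,Q}(f {\bf 1}_Q)$. First I would reduce to proving the inequality for the dyadic $L^s$-maximal function $\M^d_s f := \M^d(|f|^s)^{1/s}$, since $|f|\le \M^d_s f$ a.e. The key pointwise ingredient is the bound
\begin{equation*}
\|\Pi_Q f\|_{L^\infty(Q)} \lesssim N^{\max\{1/2,\,1/s\}} \left( \aver{Q} |f|^s \, dx \right)^{1/s},
\end{equation*}
which follows from Lemma~\ref{lem} applied to $\Pi_Q f\in E$ with exponent $s$ when $s\le 2$ and with exponent $2$ combined with H\"older's inequality when $s>2$, together with \eqref{eq:projection}.

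Next I would establish a good-$\lambda$ inequality. Given $\lambda>0$, decompose $\{\M^d_s f > \lambda\}$ into maximal dyadic cubes $(Q_k)$; maximality gives $\aver{Q_k}|f|^s \lesssim \lambda^s$, and for any point $x\in Q_k$ with $\M^\sharp_{s,\Theta} f(x)\le \gamma\lambda$ the definition of the sharp function yields $\|f-\Pi_{Q_k}f\|_{L^s(Q_k)}^s \le (\gamma\lambda)^s|Q_k|$. Writing $f = \Pi_{Q_k}f + (f-\Pi_{Q_k}f)$ on $Q_k$, the previous pointwise bound shows that $\Pi_{Q_k}f$ contributes at most $CN^{\max\{1/2,\,1/s\}}\lambda$ in $L^\infty(Q_k)$, so setting $K(N):=2CN^{\max\{1/2,\,1/s\}}$ one has
\begin{equation*}
\{x\in Q_k : \M^d_s f(x) > K(N)\lambda\} \subset \{x : \M^d_s((f-\Pi_{Q_k}f){\bf 1}_{Q_k})(x) > \tfrac{K(N)}{2}\lambda\}.
\end{equation*}
The weak-type $(s,s)$ bound on $\M^d_s$ then produces a Lebesgue gain of order $\gamma^s N^{-s\max\{1/2,1/s\}}|Q_k|$, and the hypothesis $\omega\in RH_{t'}$ transfers this to the $\omega$-level-set inequality
\begin{equation*}
\omega\{\M^d_s f > K(N)\lambda,\ \M^\sharp_{s,\Theta} f \le \gamma\lambda\} \lesssim \frac{\gamma^{s/t}}{N^{(s/t)\max\{1/2,1/s\}}}\, \omega\{\M^d_s f > \lambda\}.
\end{equation*}

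Finally, the distribution-function identity $\|\M^d_s f\|_{L^p(\omega)}^p = K(N)^p\int_0^\infty p\mu^{p-1}\omega\{\M^d_s f>K(N)\mu\}\,d\mu$, combined with the good-$\lambda$ inequality above, permits absorption provided $\gamma$ is chosen so that $K(N)^p \gamma^{s/t} N^{-(s/t)\max\{1/2,1/s\}}$ is a small constant; this forces $\gamma \simeq N^{-(pt/s-1)\max\{1/2,1/s\}}$, and the resulting multiplicative factor $K(N)\gamma^{-1}\simeq N^{(tp/s)\max\{1/2,1/s\}}$ matches the claimed exponent. The hard part is the good-$\lambda$ step: the $L^\infty$ size of the projection forces the threshold to inflate by $N^{\max\{1/2,1/s\}}$, but the weak-type bound on the error contracts the measure by the reciprocal $s$-th power; the reverse H\"older hypothesis converts this Lebesgue gain into an $\omega$-gain of power $1/t$, and balancing the two powers in the iteration yields the stated exponent. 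A routine truncation argument is needed to ensure $\|\M^d_s f\|_{L^p(\omega)}<\infty$ before any terms are absorbed.
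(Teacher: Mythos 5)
Your argument is correct and is in substance the same as the paper's: the paper decomposes $|f|^s \lesssim |f-\pr_{\Theta,Q}(f{\bf 1}_Q)|^s + |\pr_{\Theta,Q}(f{\bf 1}_Q)|^s$, controls the second term in $L^\infty(Q)$ by $N^{s\max\{\frac12,\frac1s\}}\inf_Q \M(|f|^s)$ via Lemma \ref{lem} and (\ref{eq:projection}) --- exactly your key pointwise bound --- and then invokes the Auscher--Martell good-$\lambda$ theorem \cite[Theorem 3.1]{AM} with $a\simeq N^{s\max\{\frac12,\frac1s\}}$, tracking the dependence of the constants on $a$. Your explicit dyadic good-$\lambda$ iteration is precisely what sits inside that cited black box, and your balancing condition $K(N)^p\gamma^{s/t}N^{-(s/t)\max\{\frac12,\frac1s\}}\le \frac12$ reproduces the asserted $a^{pt}$-growth of the constant, hence the same exponent $N^{\frac{tp}{s}\max\{\frac12,\frac1s\}}$.
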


The proof relies on a {\it Good-lambda inequality} and Lemma \ref{lem}.

\begin{proof} We make use on the abstract theory developed in \cite{AM} by Auscher and Martell.
We also follow notations of \cite[Theorem 3.1]{AM}. Indeed, for each ball $Q\subset \rn$ we have the following 
 $$ F(x) := |f(x)|^s \lesssim \left|f(x)-\pr_{\Theta,Q}(f{\bf 1}_Q)(x) \right|^s + \left|\pr_{\Theta,Q}(f{\bf 1}_Q)(x) \right|^s := G_Q(x)+H_Q(x).
$$ 
 By definition, it comes
 $$ \aver{Q} G_Q dx \leq \inf _{Q} \M_{s,\Theta}^\sharp (f)^s$$
 and following Lemma \ref{lem} (with (\ref{eq:projection}))
 \begin{align*}
  \sup_{x\in Q} H_Q & = \|\pr_{\Theta,Q}(f {\bf 1}_Q)\|_{L^\infty(Q)}^s \lesssim N^{s\max\{\frac{1}{2},\frac{1}{s}\}} \left( \aver{3Q} |\pr_{\Theta,Q}(f {\bf 1}_Q)|^s dx\right) \\
  & \lesssim N^{s\max\{\frac{1}{2},\frac{1}{s}\}} \left( \aver{Q} |f|^s dx\right) \lesssim  N^{s\max\{\frac{1}{2},\frac{1}{s}\}} \inf_Q {\mathcal M} F.
  \end{align*}
 So we can apply \cite[Theorem 3.1]{AM} (with $q=\infty$ and $a\simeq N^{s\max\{\frac{1}{2},\frac{1}{s}\}}$) and by checking the behaviour of the constants with respect to ``$a$" in its proof, we obtain for every $p\geq 1$
 $$ \left\| {\mathcal M}_s(f)^s \right\|_{L^p(\omega)} \lesssim N^{spt \max\{\frac{1}{2},\frac{1}{s}\}} \left\| \M_{s,\Theta}^\sharp (f)^s \right\|_{L^p(\omega)},$$
 which yields the desired result.
\end{proof}

Then, we evaluate a multi-frequency Calder\'on-Zygmund operator via this new maximal sharp function.

\begin{proposition} \label{prop:maximal} Let $T$ be a Calder\'on-Zygmund operator relatively to $\Theta$ and $s\in(1,\infty)$. Then, we have the following pointwise estimate:
$$  \M_{s,\Theta}^\sharp (T(f)) \lesssim N^{|\frac{1}{s}-\frac{1}{2}|} {\mathcal M}_s(f) .$$
\end{proposition}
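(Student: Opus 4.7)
My plan is to fix a ball $Q \ni x_0$ and split $f = f_1 + f_2$ with $f_1 := f\mathbf{1}_{9Q}$ (local) and $f_2 := f - f_1$ (distant). The projection optimality $\|T(f)\mathbf{1}_Q - \pr_{\Theta,Q}(T(f)\mathbf{1}_Q)\|_{L^s(3Q)} \le \|T(f)\mathbf{1}_Q - v\|_{L^s(3Q)}$, applied to a judicious trial element $v\in E$ and combined with the triangle inequality on $L^s(3Q)$, will let me treat the two contributions separately. For the local part I would take $v = 0$ to get $\|T(f_1)\mathbf{1}_Q - \pr_{\Theta,Q}(T(f_1)\mathbf{1}_Q)\|_{L^s(3Q)} \le \|T(f_1)\|_{L^s(Q)}$, and then invoke Theorem~\ref{thm:boundedness} to obtain $\|T(f_1)\|_{L^s(\re^n)} \lesssim N^{|1/s-1/2|}\|f_1\|_{L^s} \lesssim N^{|1/s-1/2|}|Q|^{1/s}\M_s f(x_0)$, producing the claimed contribution $\lesssim N^{|1/s-1/2|}\M_s f(x_0)$.

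For the distant part, I would approximate $T(f_2)$ uniformly on $3Q$ by an explicit element of $E$. Writing the modulated kernels $\widetilde{K}_j(x,y):= e^{-i\xi_j\cdot(x-y)}K_j(x,y)$, so that $T_j(f_2)(x)=e^{i\xi_j\cdot x}\int\widetilde{K}_j(x,y)e^{-i\xi_j\cdot y}f_2(y)\,dy$, I freeze $x$ at the center $c(Q)$ and set
\[
\alpha_j := \int \widetilde{K}_j(c(Q),y)\,e^{-i\xi_j\cdot y}\,f_2(y)\,dy,\qquad \phi(x) := \sum_{j=1}^N \alpha_j\,e^{i\xi_j\cdot x} \in E.
\]
Then $T(f_2)(x)-\phi(x)=\sum_j e^{i\xi_j\cdot x}\int\bigl[\widetilde{K}_j(x,y)-\widetilde{K}_j(c(Q),y)\bigr]e^{-i\xi_j\cdot y}f_2(y)\,dy$, and applying the mean value theorem to $\widetilde{K}_j$ along the segment from $c(Q)$ to $x\in 3Q$, combined with the regularity hypothesis $\sum_j|\nabla\widetilde{K}_j(z,y)|\lesssim|z-y|^{-n-1}$ and a standard dyadic annulus decomposition of $(9Q)^c$, will yield
\[
\|T(f_2)-\phi\|_{L^\infty(3Q)} \lesssim r_Q \int_{(9Q)^c}\frac{|f(y)|}{|y-c(Q)|^{n+1}}\,dy \lesssim \M f(x_0) \le \M_s f(x_0).
\]

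Finally, I would use $v = \phi|_{3Q}\in E$ as a trial element in the projection optimality, which after dividing by $|Q|$ gives
\[
\aver{Q}\bigl|T(f)-\pr_{\Theta,Q}(T(f)\mathbf{1}_Q)\bigr|^s\,dx\lesssim \aver{Q}|T(f_1)|^s\,dx + \aver{Q}|T(f_2)-\phi|^s\,dx +\frac{1}{|Q|}\int_{3Q\setminus Q}|\phi|^s\,dx,
\]
with the first two summands already controlled by the preceding steps. The hard part will be the residual boundary integral, which is nonzero precisely because elements of $E$ do not vanish outside $Q$. To control it, I would use the pointwise bound $|\phi|\le |T(f_2)|+C\M_s f(x_0)$ on $3Q$ (a consequence of the uniform approximation) to reduce matters to an estimate for $\|T(f_2)\|_{L^s(3Q)}$, obtained by iterating the same local/distant decomposition around a slight enlargement of $Q$: the enlarged local piece picks up at most an $N^{|1/s-1/2|}$ factor through Theorem~\ref{thm:boundedness}, while Lemma~\ref{lem} can be invoked to compare $L^s$-norms of elements of $E$ on $Q$ and $3Q$ without exceeding this factor. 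Assembling the three contributions then yields the pointwise bound $\M_{s,\Theta}^\sharp(T(f))(x_0)\lesssim N^{|1/s-1/2|}\M_s f(x_0)$.
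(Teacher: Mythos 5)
Your strategy is the one the paper itself uses: the same local/far splitting of $f$ (at $10Q$ in the paper, $9Q$ for you), the same competitor in $E$ obtained by freezing the modulated kernels $\widetilde{K}_j(x,y)=e^{-i\xi_j\cdot(x-y)}K_j(x,y)$ at the center $c(Q)$, the factor $N^{|\frac1s-\frac12|}$ coming from the $L^s$ bound of Theorem \ref{thm:boundedness} applied to the local piece, and the hypothesis on $\sum_j|\nabla\widetilde{K}_j|$ plus dyadic annuli for the far piece. Those three steps are correct and are exactly the paper's proof.

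The problem is your fourth step. You are right that, if the projection is literally the $L^s(3Q)$-minimizer while the sharp function averages only over $Q$, substituting the competitor $\phi$ leaves the residual term $|Q|^{-1}\int_{3Q\setminus Q}|\phi|^s\,dx$ (the paper's inequality (\ref{eq::}) silently drops it). But your proposed control of this term cannot work. On $3Q$ one has $T(f_2)=\phi+O(\M f(x_0))$, so bounding $\int_{3Q\setminus Q}|\phi|^s$ is equivalent to bounding $\int_{3Q\setminus Q}|T(f_2)|^s$, and the latter is \emph{not} $\lesssim N^{s|\frac1s-\frac12|}|Q|\,\M_s f(x_0)^s$. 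Already for $N=1$, $\Theta=\{0\}$ and $T$ the Hilbert transform, take $f=\sum_{k=2}^m\mathbf{1}_{[4^k,2\cdot4^k]}$ and $Q=[-1,1]$: then $\M_s f(0)\simeq 1$ uniformly in $m$, while $T(f_2)\simeq m\log 2$ on all of $3Q$. Indeed $\phi(c(Q))=T(f_2)(c(Q))=\sum_j\int K_j(c(Q),y)f_2(y)\,dy$ is precisely the ``free constant'' that subtracting a projection is designed to remove; it carries no cancellation and is not dominated by any maximal function of $f$. Iterating the local/far splitting around an enlargement of $Q$ only handles the contribution of $f$ near $Q$ and re-encounters the same uncontrolled far-field term; and Lemma \ref{lem} goes the wrong way for your purpose (it bounds a sup on $Q$ by an average over the \emph{larger} set $3Q$, whereas you would need to dominate the $L^s(3Q)$-norm of an element of $E$ by its $L^s(Q)$-norm, a Tur\'an-type inequality whose constant is exponential in $N$, not $N^{1/2}$).

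The example above shows the residual term is not merely hard but genuinely unboundable whenever the competitor is penalized on $3Q\setminus Q$, so no analytic trick will close this step. The resolution is definitional: the sharp function must be read so that the infimum over $E$ is tested on the same set as the average (e.g.\ $\M^\sharp_{s,\Theta}(f)(x_0)=\sup_{Q\ni x_0}\inf_{\phi\in E}\big(\aver{Q}|f-\phi|^s\,dx\big)^{1/s}$, or with the average taken over $3Q$ to match the projection). With that reading the boundary term disappears and your first three steps complete the proof exactly as in the paper.
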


\begin{proof} We follow the well-known proof for usual Calder\'on-Zygmund operators and adapt the arguments to the current situation. So consider a point $x_0$ and a ball $Q\subset \rn$ containing $x_0$, we have to estimate
$$ \left( \aver{Q} \left|T(f)-\pr_{\Theta,Q}(T(f){\bf 1}_Q) \right|^s dx \right)^{\frac{1}{s}}.$$
We split the function into a local part $f_0$ and an off-diagonal part $f_\infty$: 
$$f=f_0+f_\infty:= f{\bf 1}_{10 Q} + f {\bf 1}_{(10 Q)^c}.$$ 
By definition of the projection operator, we know that
\begin{align*}
 \left( \aver{Q} \left|T(f)-\pr_{\Theta,Q}(T(f){\bf 1}_Q) \right|^s dx \right)^{\frac{1}{s}} & \leq \left( \aver{Q} \left|T(f)-\pr_{\Theta,Q}(T(f_\infty){\bf 1}_Q) \right|^s dx \right)^{\frac{1}{s}} \\
 & \leq \left( \aver{Q} \left|T(f_0) \right|^s dx \right)^{\frac{1}{s}} + \left( \aver{Q} \left|T(f_\infty)-\pr_{\Theta,Q}(T(f_\infty){\bf 1}_Q) \right|^s dx \right)^{\frac{1}{s}}.
\end{align*}
For the local part, we use boundedness in $L^s$ of the operator $T$ (Proposition \ref{prop:bound}), hence
\begin{align*}
 \left( \aver{Q} \left|T(f_0)\right|^s dx \right)^{\frac{1}{s}} & \lesssim |Q|^{-\frac{1}{s}} \|T(f_0)\|_{L^s(Q)} \lesssim  N^{(\frac{1}{2}-\frac{1}{s})} \left(|Q|^{-\frac{1}{s}} \|f_0\|_{L^s}\right) \\
& \lesssim N^{|\frac{1}{2}-\frac{1}{s}|} {\mathcal M}_s(f)(x_0).
\end{align*}
Then let us focus on the second part, involving $f_\infty$. \\
We use the decomposition (with an integral representation) since we are in the off-diagonal case: for $x\in Q$ 
$$ T(f_\infty)(x) = \sum_{j=1}^N  \int K_j(x,y) f_\infty(y) dy.$$
Consider the following function, defined on $3Q$ by (where $c(Q)$ is the center of $Q$)
$$ \Phi:=  x\in 3Q \rightarrow \sum_{j=1}^N  \int  e^{i\xi_j\cdot (x-c(Q))} K_j(c(Q),y) f_\infty(y) dy.$$
So $\Phi \in E$ (see Definition \ref{def:max}) and hence 
\begin{equation} \left( \aver{Q} \left|T(f_\infty)-\pr_{\Theta,Q}(T(f_\infty){\bf 1}_Q) \right|^s dx \right)^{\frac{1}{s}} \leq \left( \aver{Q} \left|T(f_\infty)-\Phi \right|^s dx \right)^{\frac{1}{s}}. \label{eq::} \end{equation}
If we set $\widetilde{K}_j(x,z):=K_j(x,z)e^{-i\xi_j\cdot (x-z)}$, then 
$$ T(f_\infty)(x) -\Phi(x) =  \sum_j \int \left[\widetilde{K}_{j}(x,y) - \widetilde{K}_j(c(Q),y)\right] e^{i\xi_j(x-y)} f_\infty(y) dy.$$
From the regularity assumption on the kernels $K_j$'s, we have for $y\in (10 Q)^c$
\begin{equation} \sum_{j} \left| \widetilde{K}_{j}(x,y) - \widetilde{K}_j(c(Q),y) \right| \lesssim r_Q \sup_{z\in Q} \sum_j \left| \nabla_{x} \widetilde{K}_{j}(z,y)\right| \lesssim r_Q^{-n}\left(1+\frac{d(y,Q)}{r_Q}\right)^{-n-1}. \label{eq:kernel} \end{equation}
We also have (since $y\in (10 Q)^c$ and $x,c(Q)\in Q$)
\begin{align*}
 \left| T(f_\infty)(x) -\Phi(x)\right| & \lesssim  \int_{|z|\geq 10 r_Q} r_Q^{-n}\left(1+\frac{|x-c(Q)-z|}{r_Q}\right)^{-n-1} |f(c(Q)+z)| dz \\
 & \lesssim \int_{|z|\geq 5 r_Q} r_Q^{-n}\left(1+\frac{|z|}{r_Q}\right)^{-n-1} |f(x_0+z)| dz \\
 & \lesssim {\mathcal M}(f)(x_0),
\end{align*}
which concludes the proof.
\end{proof}

We obtain the following corollary:

\begin{corollary} \label{cor} Let $\Theta$ be a collection of $N$ frequencies. For $p\in (2,\infty)$, $s\in[2,p)$ and $t\in(1,\infty)$, a multi-frequency Calder\'on-Zygmund operator $T$ is bounded on $L^p(\omega)$ for every weight $\omega \in RH_{t'} \cap {\mathbb A}_{\frac{p}{s}}$ with
$$ \|T\|_{L^p(\omega) \to L^p(\omega)} \lesssim N^{\frac{tp}{2s}+\left(\frac{1}{2}-\frac{1}{s}\right)}.$$
\end{corollary}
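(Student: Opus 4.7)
The plan is to chain together the two preceding propositions and the standard boundedness of the powered maximal function on weighted $L^p$ spaces. First I would start from $f \in L^p(\omega) \cap L^s$ (a dense subclass is enough by approximation) and apply Proposition \ref{prop:FeffermanStein} to the function $T(f)$; since $p > s$ by hypothesis and $\omega \in RH_{t'}$, this yields
$$ \|T(f)\|_{L^p(\omega)} \lesssim N^{\frac{tp}{s}\max\{1/2,1/s\}} \,\bigl\|\M_{s,\Theta}^\sharp(T(f))\bigr\|_{L^p(\omega)}. $$
Because the corollary restricts to $s \geq 2$, the maximum inside the exponent collapses to $1/2$, producing the factor $N^{tp/(2s)}$.

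Next I would invoke Proposition \ref{prop:maximal} to dominate the multi-frequency sharp function pointwise by
$$ \M_{s,\Theta}^\sharp(T(f))(x) \lesssim N^{|1/s-1/2|}\, \M_s(f)(x), $$
and again use $s \geq 2$ to simplify $|1/s-1/2| = 1/2 - 1/s$. Taking $L^p(\omega)$ norms and combining with the previous display gives
$$ \|T(f)\|_{L^p(\omega)} \lesssim N^{\frac{tp}{2s} + \frac{1}{2} - \frac{1}{s}} \,\|\M_s(f)\|_{L^p(\omega)}. $$

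To finish, I would observe that $\M_s(f) = \M(|f|^s)^{1/s}$, so $\|\M_s(f)\|_{L^p(\omega)}^s = \|\M(|f|^s)\|_{L^{p/s}(\omega)}$. The exponent $p/s$ is strictly greater than $1$ by hypothesis, and $\omega \in \mathbb{A}_{p/s}$, so Muckenhoupt's theorem gives
$$ \|\M(|f|^s)\|_{L^{p/s}(\omega)} \lesssim \bigl\||f|^s\bigr\|_{L^{p/s}(\omega)} = \|f\|_{L^p(\omega)}^{s}, $$
with an implicit constant independent of $N$. Combining the displays produces exactly the claimed bound $\|T\|_{L^p(\omega)\to L^p(\omega)} \lesssim N^{tp/(2s) + (1/2 - 1/s)}$.

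There is no real obstacle in the argument: the entire content is already carried by Propositions \ref{prop:FeffermanStein} and \ref{prop:maximal} together with classical weight theory. The only small point to be careful about is the bookkeeping of the two exponents of $N$ and the use of $s \geq 2$ to remove the max and the absolute value, so that the final exponent matches the stated $\gamma = tp/(2s) + (1/2 - 1/s)$.
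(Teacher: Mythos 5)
Your argument is correct and is essentially identical to the paper's own proof: chain Proposition \ref{prop:FeffermanStein} applied to $T(f)$, the pointwise bound of Proposition \ref{prop:maximal}, and the weighted boundedness of $\M_s$ for $\omega\in{\mathbb A}_{p/s}$, using $s\geq 2$ to simplify the exponents. No issues.
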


\begin{proof} Using Propositions \ref{prop:FeffermanStein} and \ref{prop:maximal}, it follows that for $p>s\geq 2$ (assuming $\omega \in {\mathbb A}_{\frac{p}{s}}$)
\begin{align*} \|T(f) \|_{L^p(\omega)} & \lesssim N^{\frac{tp}{2s}} \left\| \M_{s,\Theta}^\sharp [T(f)] \right\|_{L^p(\omega)} \\
& \lesssim N^{\frac{tp}{2s}+\left(\frac{1}{2}-\frac{1}{s}\right)}  \left\| {\mathcal M}_s(f) \right\|_{L^p(\omega)} \\
& \lesssim N^{\frac{tp}{2s}+\left(\frac{1}{2}-\frac{1}{s}\right)}  \left\| f \right\|_{L^p(\omega)},
\end{align*}
where we used weighted boundedness of the maximal function since $\omega \in {\mathbb A}_{\frac{p}{s}}$.
\end{proof}

\mb As explained in the introduction, this estimate is only interesting when the exponent $\frac{tp}{2s}+\left(\frac{1}{2}-\frac{1}{s}\right)$ is lower than $1$.

\section{Connexion to Bochner-Riesz multipliers} \label{sec3}

In this section, we aim to describe how such arguments could be applied to generalized Bochner-Riesz multipliers. Weighted estimates for Bochner-Riesz multipliers has been initiated in \cite{Vargas, Christ, CDL}. We first emphasize that we do not pretend to obtain new weighted estimates for Bochner-Riesz multipliers. But we only want to describe here a new point of view and a new approach for such estimates, which will be the subject of a future investigation. Such an application is a great motivation for pursuing the study of a multi-frequency Calder\'on-Zygmund analysis.

\mb

Consider also $\Omega$ a bounded open subset of $\rn$ such that its boundary $\Gamma:=\overline{\Omega}\setminus \Omega$ is an hyper-manifold of Hausdorff dimension $n-1$. For $\delta>0$, we then define the generalized Bochner-Riesz multiplier, given by
$$ R_{\Omega,\delta}(f) (x) := \int_\Omega e^{ix\cdot \xi} \widehat{f}(\xi) m_\delta d\xi,$$
where $m_\delta$ is a smooth symbol supported in $\overline{\Omega}$ and satisfying in $\Omega$
$$ | \partial^\alpha m_\delta(\xi)| \lesssim d(\xi,\Gamma)^{\delta-|\alpha|}.$$

\mb 
We first use a Whitney covering $(O_i)_i$ of $\Omega$. That is a collection of sub-balls such that
\begin{itemize}
 \item the collection $(O_i)_i$ covers $\Omega$ and has a bounded overlap;
 \item the radius $r_{O_i}$ is equivalent to $d(O_i,\Gamma)$.
\end{itemize}
Associated to this collection, we build a partition of the unity $(\chi_i)_i$ of smooth functions such that $\chi_i$ is supported on $O_i$ with 
$$ \sum_i \chi_i(\xi) = {\bf 1}_{\Omega}(\xi)$$
and $\|\partial^\alpha \chi_i\|_\infty \lesssim r_{O_i}^{-|\alpha|}$. \\
Then, $R_{\delta}$ may be written as
$$ R_{\delta}(f) (x) = \sum_{j=-\infty}^\infty T_j(f)(x),$$
with
\begin{align}
 T_j(f)(x) & :=\sum_{\genfrac{}{}{0pt}{}{l,}{2^j \leq r_{O_l}< 2^{j+1}}} \int_\Omega e^{ix\cdot \xi} \widehat{f}(\xi) m_\delta(\xi) \chi_l(\xi) d\xi \nonumber \\
 & =2^{j\delta} U_j(f)(x), \label{eq:dec}
 \end{align}
where we set
$$ U_j(f)(x):= \sum_{\genfrac{}{}{0pt}{}{l,}{2^j \leq r_{O_l}< 2^{j+1}}} \int_\Omega e^{ix\cdot \xi} \widehat{f}(\xi) (2^{-j\delta} m_\delta(\xi)) \chi_l(\xi) d\xi.$$

\gb
{\bf Observation :} The main idea is to observe that the operator $U_j$ is a multi-frequency Calder\'on-Zygmund operator associated to the collection 
$$ \Theta_j :=\{ c(O_l), \ 2^j \leq r_{O_l}< 2^{j+1} \} \qquad \textrm{with} \qquad \sharp \Theta_j \simeq 2^{-j(n-1)}.$$

However, these operators have specific properties, one of them is that the considered balls have equivalent radius, which means that these operators have only one scale $2^{j}$. For example, this observation allows us to easily prove some boundedness:

\begin{proposition} \label{prop} Uniformly with $j\lesssim 0$, the multiplier $U_j$ is a convolution operation with a kernel $K_j$ satisfying
$$ \|K_j\|_{L^1} \lesssim 2^{-j \frac{n-1}{2}}.$$
Hence, it follows that $U_j$ is bounded on Lebesgue space $L^p$ for every $p\in[1,\infty]$. Moreover for every $s\in[1,2]$, $p\in(s,\infty)$ and every weight $\omega\in {\mathbb A}_{\frac{p}{s}}$, $U_j$ is bounded on $L^p(\omega)$ with
$$ \| U_j\|_{L^p(\omega) \rightarrow L^p(\omega)} \lesssim 2^{-j\frac{n-1}{s}}.$$
\end{proposition}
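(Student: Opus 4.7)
\medskip\noindent\textbf{Proof plan.} The proof splits along the three claims of the proposition.

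First, to establish $\|K_j\|_{L^1}\lesssim 2^{-j(n-1)/2}$, I would observe that the multiplier
\[
M_j(\xi):=\sum_{l:\,2^j\leq r_{O_l}<2^{j+1}}2^{-j\delta}m_\delta(\xi)\chi_l(\xi)
\]
is supported in an $O(2^j)$-tubular neighborhood of $\Gamma$, of total measure $\lesssim 2^j$, and satisfies the derivative bounds $|\partial^\alpha M_j|\lesssim 2^{-j|\alpha|}$ (since on $\supp\chi_l$ one has $d(\xi,\Gamma)\simeq 2^j$). Parseval then yields both $\|K_j\|_{L^2}\lesssim 2^{j/2}$ and, more generally, $\||y|^N K_j\|_{L^2}=\|\partial^N M_j\|_{L^2}\lesssim 2^{j/2-jN}$. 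I would then decompose $\rn$ into the natural-scale ball $B(0,2^{-j})$ and the dyadic annuli $A_k:=\{|y|\sim 2^{-j+k}\}$ for $k\geq 1$, and use Cauchy--Schwarz on each piece: the ball contributes $\lesssim |B(0,2^{-j})|^{1/2}\cdot 2^{j/2}=2^{-j(n-1)/2}$, and each annulus contributes $\lesssim 2^{n(k-j)/2}\cdot 2^{-kN+j/2}=2^{-j(n-1)/2}\cdot 2^{k(n/2-N)}$, which is summable as soon as $N>n/2$.

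Second, the unweighted $L^p$-bound for all $p\in[1,\infty]$ follows immediately from Young's convolution inequality $\|U_jf\|_{L^p}\leq \|K_j\|_{L^1}\|f\|_{L^p}$.

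Third, for the weighted estimate my goal is the pointwise domination
\[
|U_jf(x)|\lesssim 2^{-j(n-1)/s}\,\M_s(f)(x),
\]
from which the $L^p(\omega)$-bound follows because $\M_s$ is bounded on $L^p(\omega)$ whenever $\omega\in{\mathbb A}_{p/s}$. The critical input is a \emph{weighted} Hausdorff--Young estimate on $K_j$: since $s\in[1,2]$ (so $s'\in[2,\infty]$), Hausdorff--Young gives
\[
\|y^\alpha K_j\|_{L^{s'}}\leq\|\partial^\alpha M_j\|_{L^s}\lesssim \|\partial^\alpha M_j\|_{L^\infty}\,|\supp (M_j)|^{1/s}\lesssim 2^{j/s-j|\alpha|},
\]
equivalently $\|(1+2^j|y|)^\alpha K_j\|_{L^{s'}}\lesssim 2^{j/s}$ for every $\alpha\geq 0$. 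Applying H\"older with exponents $(s',s)$ on each annulus $A_k$ (now $k\geq 0$, with $A_0:=B(0,2^{-j})$) yields
\[
\int_{A_k}|K_j(y)||f(x-y)|\,dy\lesssim \|K_j\|_{L^{s'}(A_k)}\cdot |B(0,2^{-j+k})|^{1/s}\M_s(f)(x)\lesssim 2^{-j(n-1)/s}\cdot 2^{k(n/s-\alpha)}\M_s(f)(x),
\]
and choosing $\alpha>n/s$ makes the sum in $k\geq 0$ converge.

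\medskip\noindent\textbf{Main obstacle.} The naive pointwise envelope $|K_j(y)|\lesssim 2^j(1+2^j|y|)^{-N}$, obtainable directly by integration by parts on $M_j$, has $L^1$-mass of size $2^{-j(n-1)}$, far larger than the true $\|K_j\|_{L^1}\lesssim 2^{-j(n-1)/2}$; routing the weighted bound through such a pointwise majorant only reproduces the weak estimate $|U_jf|\lesssim 2^{-j(n-1)}\M(f)$, i.e.\ the $s=1$ case. The improvement to $2^{-j(n-1)/s}$ for $s\in(1,2]$ requires exploiting cancellation in $K_j=\sum_l k_l$ not pointwise but in a weighted $L^{s'}$ sense via Fourier-side derivative estimates on $M_j$, and this is precisely the step where the range of validity of Hausdorff--Young forces the restriction $s\leq 2$.
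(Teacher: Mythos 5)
Your proposal is correct and follows essentially the same route as the paper: size/derivative bounds on the symbol over its $O(2^j)$-neighborhood of $\Gamma$, Plancherel plus Cauchy--Schwarz on dyadic annuli for the $L^1$ kernel bound, Young's inequality for the $L^p$ claim, and a weighted-$L^{s'}$ kernel estimate leading to the pointwise domination by $2^{-j(n-1)/s}\M_s(f)$. The only cosmetic difference is that you invoke Hausdorff--Young on $\partial^\alpha M_j$ where the paper interpolates the weighted kernel bounds between $L^2$ (Plancherel) and $L^\infty$ (integration by parts) --- these are the same estimate --- and you usefully spell out the annulus-by-annulus H\"older argument that the paper leaves implicit.
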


\begin{proof} The operator $U_j$ is a Fourier multiplier, associated to the symbol 
$$\sigma_j(\xi):=\sum_{\genfrac{}{}{0pt}{}{l,}{2^j \leq r_{O_l}< 2^{j+1}}} (2^{-j\delta} m_\delta(\xi)) \chi_l(\xi).$$ 
Since the considered balls $(O_l)_l$ are almost disjoint, it comes that 
$$ \|\sigma_j\|_{L^2} \lesssim |\{\xi, d(\xi,\partial \Omega)\simeq 2^j \}|^{\frac{1}{2}} \lesssim 2^{\frac{j}{2}}.$$
Moreover, using regularity assumptions on $m_\delta$, we deduce that for every $\alpha$
$$ \|\partial^\alpha \sigma_j\|_{L^2} \lesssim 2^{-j|\alpha|} |\{\xi, d(\xi,\partial \Omega) \simeq 2^j \}|^{\frac{1}{2}} \lesssim 2^{j(\frac{1}{2}-|\alpha|)}.$$
So with $K_j:= \mathcal{F}(\sigma_j)$, it follows that for any integer $M$
\begin{equation} \left\| (1+2^{j}|\cdot|)^{M} K_j\right\|_{L^2} \lesssim 2^{\frac{j}{2}}. \label{eq:L^2} \end{equation}
Hence
$$ \left\| K_j\right\|_{L^1} \lesssim 2^{-j\frac{n-1}{2}}.$$
Using Minkowski inequality, we deduce that for every $p\in[1,\infty]$
$$ \| U_j\|_{L^p \to L^p} \lesssim \|K_j\|_{L^1} \lesssim  2^{-j\frac{n-1}{2}}.$$
Let us now focus on the second claim about weighted estimates. Using integrations by parts for computing the kernel $K_j$, it comes for any integer $M$
\begin{equation} \left\| (1+2^{j}|\cdot|)^{M} K_j\right\|_{L^\infty} \lesssim 2^{j}. \end{equation}
By interpolation with (\ref{eq:L^2}), for $s\in[1,2]$ we get
\begin{equation} \left\| (1+2^{j}|\cdot|)^{M} K_j\right\|_{L^{s'}} \lesssim 2^{\frac{j}{s}}, \end{equation}
which gives
$$  U_j(f) \lesssim 2^{-j\frac{n-1}{s}} \M_{s}(f).$$
Hence, for every $p> s$ and every weight $\omega \in {\mathbb A}_{\frac{p}{s}}$
$$ \|U_j\|_{L^p(\omega) \to L^p(\omega)} \lesssim 2^{-j\frac{n-1}{s}}.$$
\end{proof}

\mb
In this context, $\sharp \Theta_j \simeq 2^{-j(n-1)}$, so the constant $2^{-j \frac{n-1}{s}}$ is equivalent to $(\sharp \Theta_j)^{\frac{1}{s}}$ and this is a better constant than the one obtained in Corollary \ref{cor} (for a subclass of ${\mathbb A}_{\frac{p}{s}}$ weights). \\
So improving these ``easy bounds'' means to obtain inequalities such as
$$ \| U_j\|_{L^p(\omega) \rightarrow L^p(\omega)} \lesssim (\sharp \Theta_j)^{\gamma}$$
for some better exponent $\gamma<\frac{1}{s}$.

\gb

Let us finish by suggesting how could we get improvements of our approach to get interesting results for Bochner-Riesz multipliers:

\mb
{\bf Question :} The general approach, developed in the previous section, only allows to get an exponent
$$ \gamma=\frac{tp}{2s}+\left(\frac{1}{2}-\frac{1}{s}\right)$$
(with some $s\in[2,p)$) which is bigger than $\frac{1}{2}$ (since $p>s\geq 2$ and $t>1$). 
So to improve this exponent $\gamma$, two things seem to be crucial: 
\begin{itemize}
 \item to extend the use of Lemma \ref{lem} for $p\geq 2$ which would allow us to get an exponent $\frac{tp}{s^2}$ instead of $\frac{tp}{2s}$;
 \item to use the geometry of the boundary $\Gamma$ to get better exponents, even for the unweighted estimates. Indeed, for example for the unit ball (using its non vanishing curvature), we know that (see \cite{Lee, survey})
$$ \|U_j\|_{L^p\to L^p} \lesssim 2^{-j\delta(p)}$$
with if $n=2$
$$ \delta(p):= \max\left\{ 2\left|\frac{1}{2}-\frac{1}{p}\right|-\frac{1}{2},0 \right\}.$$
and if $n\geq 3$ and $p\geq \frac{2(n+2)}{n}$ or $p\leq \frac{2(n+2)}{n+4}$
$$ \delta(p):= \max\left\{ n\left|\frac{1}{2}-\frac{1}{p}\right|-\frac{1}{2},0 \right\}.$$
\end{itemize}

\end{document}